
\documentclass[a4paper,12pt]{article}
\usepackage{amsmath,amsthm,amsfonts}
%
%
\setlength{\textwidth}{16cm}
\setlength{\textheight}{24cm}
\setlength{\topmargin}{-2cm}
\setlength{\oddsidemargin}{0cm}
\setlength{\evensidemargin}{0cm}
%
%
\newtheorem{theorem}{Theorem}[section]

\newtheorem{corollary}[theorem]{Corollary}
\theoremstyle{definition}

\newtheorem{example}{Example}


\DeclareMathOperator{\GF}{GF}
\DeclareMathOperator{\PG}{PG}

\newcommand{\ZZ}{\mathbb{Z}}

\newcommand{\as}{(X,\{R_i\}_{i=0}^d)}
\newcommand{\fused}{(X,\{R_{\Lambda_j}\}_{j=0}^{d'})}


%
%

%
%
\newcommand{\allone}[1]{\mathbf{1}_{#1}}
\newcommand{\cD}{\mathcal{D}}
\newcommand{\cS}{\mathcal{S}}
\newcommand{\cX}{\mathcal{X}}
\begin{document}

{\LARGE {\bf 
\noindent
Pseudocyclic Association Schemes and Strongly Regular Graphs}
}

\bigskip
\noindent
Version of April 16, 2009

\bigskip\bigskip\noindent
{\scshape Takuya Ikuta\footnote{permanent address: 
Faculty of Law, Kobe Gakuin University,
Minatojima, Chuo-ku, Kobe, 650-8586 Japan} 
} \\
{\small \em 
Graduate School of Information Sciences, 
Tohoku University,
Aramaki-Aza-Aoba, Aoba-ku, Sendai, 980-8579 Japan \\
\verb+ikuta@law.kobegakuin.ac.jp+
}

\bigskip\noindent
{\scshape Akihiro Munemasa} \\
{\small\em
Graduate School of Information Sciences, 
Tohoku University,
Aramaki-Aza-Aoba, Aoba-ku, Sendai, 980-8579 Japan \\
\verb+munemasa@math.is.tohoku.ac.jp+
}

\bigskip
{\small
\bigskip\noindent
{\bf Abstract.}
Let $\cX$ be a pseudocyclic 
association scheme in which all the nontrivial relations are
strongly regular graphs with the same eigenvalues. 
We prove that the principal part of the first eigenmatrix
of $\cX$
is a linear combination of an incidence matrix of a
symmetric design and the all-ones matrix.
Amorphous pseudocyclic association schemes are examples
of such association schemes whose associated symmetric design
is trivial.
We present several non-amorphous examples, which
are either cyclotomic association schemes, or their
fusion schemes.
Special properties of 
symmetric designs guarantee the existence of further
fusions, and the two known 
non-amorphous association schemes of class $4$ discovered
by van Dam and by the authors, 
are recovered in this way.
We also give another pseudocyclic non-amorphous 
association scheme of class $7$ on $\GF(2^{21})$, and
a new pseudocyclic amorphous association scheme of class $5$
on $\GF(2^{12})$.
}

{\small
\bigskip\noindent
{\bf Keywords:} 
association scheme, cyclotomy, finite field, 
strongly regular graph
}


\section{Introduction}

A.V.~Ivanov's conjecture \cite{IP}, 
though disproved by E.R.~van Dam,
asserted that, if each nontrivial relation 
in a symmetric association scheme is
strongly regular, then an arbitrary partition of the set of
nontrivial relations gives rise to an association scheme.
Association schemes
satisfying this conclusion is called \emph{amorphous} 
(or \emph{amorphic}). 
A counterexample to A.V.~Ivanov's conjecture was given by
van Dam in \cite{D0} for the imprimitive case, and in
\cite{D} for the primitive case. Presently 
there are only a few primitive counterexamples known. 
An example due to van Dam \cite{D} has the
first eigenmatrix given by
\begin{equation}  \label{e=45}
\begin{bmatrix}
1 & 3276 & 273 & 273 & 273 \\
1 & -52 & 17 & 17 & 17 \\
1 & 12 & -15 & -15 & 17 \\
1 & 12 & -15 & 17 & -15 \\
1 & 12 & 17 & -15 & -15
\end{bmatrix},
\end{equation}
and another one is due to the authors \cite{IM} with
the first eigenmatrix given by
\begin{equation}  \label{e=75}
\begin{bmatrix}
1 & 838860 & 69905 & 69905 & 69905 \\
1 & -820 & 273 & 273 & 273 \\
1 & 204 & -239 & -239 & 273 \\
1 & 204 & -239 & 273 & -239 \\
1 & 204 & 273 & -239 & -239
\end{bmatrix}.
\end{equation}

A symmetric association scheme $\cX=\as$ is said to be
pseudocyclic if
the nontrivial multiplicities $m_1,\dots,m_d$ of $\cX$
coincide.
The first eigenmatrix of a pseudocyclic association scheme 
is of the form
\[
P=\begin{bmatrix}
1&f&\dots&f\\
1&&&\\
\vdots&&P_0&\\
1&&&
\end{bmatrix},
\]
where $f$ denotes the common nontrivial multiplicities, as well
as the common nontrivial valencies.
The submatrix $P_0$ is called the principal part of $P$.
If we restrict A.V.~Ivanov's conjecture to the
pseudocyclic case, it asserts that 
for pseudocyclic association scheme in which 
each nontrivial relation is
strongly regular, the principal part of its
first eigenmatrix is a linear combination of $I$ and
$J$, after a suitable permutation of rows.

Cyclotomic association schemes in which 
each nontrivial relation is
strongly regular, have been investigated in its own right.
It follows from McEliece's theorem (\cite{Mc}, see also
\cite[Lemma~2.8]{SW}) that the number of nontrivial eigenvalues
of the cyclotomic association scheme of class $e$ over $\GF(p^m)$
is the same as that of weights in the irreducible cyclic code $c(p,m,e)$
(see \cite[Definition~2.2]{SW}).
In this sense, such cyclotomic association schemes correspond to
two-weight irreducible cyclic codes.
Moreover, under this correspondence,
subfield codes, semiprimitive codes correspond to amorphous
cyclotomic association scheme which are imprimitive, primitive, respectively.
Primitive amorphous cyclotomic association schemes
were investigated by Baumert, Mills and Ward \cite{BMW},
and Brouwer, Wilson and Xiang \cite{BWX}.
Non-amorphous cyclotomic association schemes
in which every nontrivial relation is strongly regular,
are thus equivalent to exceptional two weight irreducible cyclic codes
in the sense of Schmidt and White \cite{SW}.
Therefore, cyclotomic association schemes corresponding to
exceptional two weight irreducible cyclic codes are
pseudocyclic counterexamples to A.V.~Ivanov's conjecture,
and there are eleven such codes in \cite{SW}.

One of the purpose of this paper is to 
show that both of the counterexamples with first eigenmatrices
(\ref{e=45}), (\ref{e=75}) are derived from some
pseudocyclic association schemes $\cX_1,\cX_2$, respectively,
of class $15$
which are also counterexamples themselves.
It turns out that, the principal part of the
first eigenmatrix of $\cX_1$ or $\cX_2$
is expressed
by an incidence matrix of $\PG(3,2)$.
In a more general setting, we prove in Theorem~\ref{main}
that the principal part of the first eigenmatrix is a linear
combination of an incidence matrix of a symmetric design
and the all-ones matrix.
For an amorphous pseudocyclic association scheme of class $d$,
the associated symmetric design is the complete
$2$-$(d,d-1,d-2)$ design. 
When the associated symmetric design is a projective space,
we show in Theorem~\ref{thm:proj} that
the existence of certain fusion schemes follows from
special properties of projective spaces.
This gives an explanation for the existence of the fusion 
schemes of class $4$ in $\cX_1,\cX_2$.
Moreover, the two pseudocyclic association schemes $\cX_1,\cX_2$
of class $15$ give rise to two pseudocyclic amorphous
fusion schemes of class $5$.
We also give a pseudocyclic class $7$ fusion scheme of 
the cyclotomic association scheme of class $49$ 
on $\GF(2^{21})$.
Its associated design is $\PG(2,2)$.

\section{Preliminaries}

We refer the reader to \cite{BI} for notation and general theory of 
association schemes.
Let $\cX=\as$ be 
a symmetric association scheme of class $d$ on $X$. 
Let $P$ be the first eigenmatrix of $\cX$.
Let $\{\Lambda_j\}_{j=0}^{d'}$ be a partition of 
$\{0,1,\ldots,d\}$ with $\Lambda_0=\{0\}$, 
and we set $R_{\Lambda_j}=\bigcup_{\ell \in \Lambda_j} R_{\ell}$. 
If $\fused$ forms an association scheme, 
then we call $\fused$ a {\em fusion} scheme of $\cX$. 
If $\fused$ is an association scheme
for any partition $\{\Lambda_j\}_{j=0}^{d'}$ of $\{0,1,\ldots,d\}$ 
with $\Lambda_0=\{0\}$, 
then $\cX$ is called {\em amorphous}.
We refer the reader to a recent
article \cite{DM} for details on amorphous association schemes.

There is a simple criterion in terms of $P$ 
for a given partition $\{\Lambda_j\}_{j=0}^{d'}$ 
to give rise to a fusion scheme 
(due to Bannai \cite{B}, Muzychuk \cite{M}): 
There exists a partition 
$\{\Delta_i\}_{i=0}^{d'}$ of $\{0,1,\ldots,d\}$ 
with $\Delta_0=\{0\}$ such that each 
$(\Delta_i, \Lambda_j)$-block of the first eigenmatrix
$P$ has a constant row sum. 
The constant row sum turns out to be the $(i,j)$ entry 
of the first eigenmatrix of the fusion scheme.

An association scheme $\cX$ of class $d$
having the nontrivial multiplicities 
$m_1=\ldots =m_d$ is called {\em pseudocyclic}.
It is known that, in a pseudocyclic association scheme $\cX$,
all the nontrivial valencies coincide
(see \cite[p.76]{BI},or \cite[Proposition~2.2.7]{BCN}).
By the {\em principal part} of the first eigenmatrix, we mean
the lower-right $d \times d$ submatrix of 
the first eigenmatrix.

Let $q$ be a prime power and let $e$ be a divisor of $q-1$. 
Fix a primitive element $\alpha$ of the multiplicative group 
of the finite field $\GF(q)$. Then $\langle \alpha^e \rangle$ is 
a subgroup of index $e$ and its cosets are 
$\alpha^i \langle \alpha^e \rangle$ ($0 \leq i \leq e-1$). 
We define $R_0=\{(x,x) \mid x \in \GF(q)\}$ and 
$R_i=\{(x,y) \mid x-y \in \alpha^i \langle \alpha^e \rangle, \;
x,y \in \GF(q)\}$  ($1 \leq i \leq e$). 
Then $(\GF(q),\{R_i\}_{i=0}^e)$ forms an 
association scheme and is called the {\em cyclotomic 
association scheme}, or {\em cyclotomic scheme}, for short,
of class $e$ on $\GF(q)$.
A cyclotomic scheme is a pseudocyclic association 
scheme.

Suppose $q=p^m$, where $p$ is a prime.
The cyclotomic scheme of class $e$ on $\GF(q)$ is
amorphous if and only if $m$ is even and $e$ divides
$p^{m'}+1$ for some divisor $m'$ of $m/2$.
This is essentially due to Baumert, Mills and Ward \cite{BMW},
but see also \cite{BWX}.

\section{A symmetric design in the first eigenmatrix}
\label{sec:main}

\begin{theorem}  \label{main}
Let $(X,\{R_i\}_{i=0}^d)$ be a pseudocyclic association scheme 
of class $d$.
Assume that the graphs
$(X,R_i)$ ($i=1,\ldots,d$) 
are all strongly regular with the same eigenvalues.
Then there exists a symmetric $2$-$(d,k,\lambda)$ design 
$\cD$ such that the principal part of the first eigenmatrix
of $\cX$ is given by $rM+s(J-M)$, where $M$ is an incidence matrix
of $\cD$, $r$ and $s$ are the nontrivial eigenvalues of the graphs
$(X,R_i)$.
\end{theorem}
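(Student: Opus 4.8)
The plan is to exploit the pseudocyclic hypothesis together with the strong regularity of each relation to pin down the entries of the principal part $P_0$, and then to verify that the resulting $0/1$-structure (after subtracting the common value $s$ and rescaling) is the incidence matrix of a symmetric design.

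First I would set up the eigenvalue bookkeeping. Since $\cX$ is pseudocyclic, every nontrivial valency equals the common value $f$, and each graph $(X,R_i)$ is strongly regular with the same two nontrivial eigenvalues, which I will call $r$ and $s$ (with $r>s$). For a fixed $i$, the $i$th column of the principal part records the eigenvalue of the $i$th adjacency matrix $A_i$ on each of the $d$ nontrivial eigenspaces $V_1,\dots,V_d$ of $\cX$; because $A_i$ is strongly regular its restriction to each $V_\ell$ is either $r$ or $s$. Thus every off-diagonal-block entry of $P_0$ lies in $\{r,s\}$. Writing $P_0=rM+s(J-M)$ for a $0/1$ matrix $M$ then just amounts to recording, in position $(\ell,i)$, whether $A_i$ acts as $r$ or as $s$ on $V_\ell$; this is the definition of $M$.

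The heart of the argument, and the step I expect to be the main obstacle, is showing that $M$ is the incidence matrix of a \emph{symmetric} $2$-design, i.e. that $M$ has constant row and column sums $k$ and that $MM^{\top}=(k-\lambda)I+\lambda J$. Here is where I would feed in the quantitative constraints coming from the association-scheme structure. The column sums of $M$ are forced to be constant because each $A_i$ is a strongly regular graph with the \emph{same} pair of eigenvalues $r,s$: the multiplicities of $r$ and $s$ are determined by $f,r,s$ alone, and summing the multiplicity equation $f + (\text{mult of }r)\cdot r + (\text{mult of }s)\cdot s$ over the eigenspaces fixes how many eigenspaces carry $r$, giving a common column sum $k$. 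To get the row sums and the $2$-design equation I would use two relations simultaneously: the orthogonality relations for $P$ (equivalently $PQ=|X|I$ with $Q$ the second eigenmatrix), and the fact that the common valency $f$ together with $r,s$ determines $|X|$ and the multiplicity $m$ via the standard identities $k(\text{valency})=-(r+1)(s+1)\cdots$ relations for strongly regular graphs. Computing $P_0 P_0^{\top}$ and substituting $P_0=rM+s(J-M)$ converts the scheme orthogonality into a quadratic identity in $M$, $J$, and $I$; matching coefficients should yield exactly $MM^{\top}=(k-\lambda)I+\lambda J$ with constant diagonal, which simultaneously forces the row sums to be constant and hence (by the square case of Fisher's inequality / the standard symmetric-design characterization) that $M$ is symmetric-design-incidence. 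I anticipate the bookkeeping to reduce to showing that two scalars extracted from the orthogonality relations are independent of the row index, and this independence is exactly what the pseudocyclic ``all multiplicities equal'' hypothesis buys.

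Finally I would assemble the pieces: the identity $P_0=rM+s(J-M)$ is definitional, the constancy of column sums and the relation $MM^{\top}=(k-\lambda)I+\lambda J$ identify $\cD$ as a symmetric $2$-$(d,k,\lambda)$ design, and $r,s$ are the stated nontrivial eigenvalues by construction. One point requiring care is the ordering of rows: the rows of $P_0$ (the eigenspaces) and its columns (the relations) are a priori indexed by different sets, and $M$ being the incidence matrix of a \emph{symmetric} design is exactly the statement that these two index sets have the same size $d$ and can be matched; I would note that symmetry of $MM^{\top}$ and $M^{\top}M$ to the same parameters, which follows from the valency/multiplicity coincidence, is what permits treating $M$ as a square incidence matrix rather than merely the incidence matrix of a $1$-design.
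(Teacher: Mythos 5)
Your proposal is correct and takes essentially the paper's own route: the paper likewise defines $M$ by $P_0=rM+s(J-M)$ and extracts everything from the orthogonality relations, namely $P_0J=-J$ (constant row sums $k=-(sd+1)/(r-s)$) and $fJ+P_0P_0^T=|X|I$ (giving $MM^T=\frac{1}{(r-s)^2}\bigl(|X|I+(s^2d+2s-f)J\bigr)$), whence $M$ is the incidence matrix of a symmetric $2$-design. One small caution: read the row-sum constancy directly from $P_0J=-J$ (which is contained in your cited relation $PQ=|X|I$) rather than from the diagonal of the quadratic identity, since the latter pins down the row sums only when $r+s\neq0$; your separate multiplicity argument for the column sums is correct but not needed.
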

\begin{proof}
By the assumption, the principal part $P_0$ can be expressed as
$P_0=rM+s(J-M)$ for some $(0,1)$-matrix $M$. 
Then by the orthogonality relations
(see \cite[Chapter II, (3.10)]{BI}),
we find
\[
P_0J=-J,\quad fJ+P_0P_0^T=|X|I,
\]
where $f$ denotes the common nontrivial multiplicities.
The former implies 
\[
MJ=-\frac{sd+1}{r-s}J,
\]
hence $k=-(sd+1)/(r-s)$ is a positive integer. The latter
implies
\[
MM^T=\frac{1}{(r-s)^2}(|X|I+(s^2d+2s-f)J).
\]
This implies that $M$ is an incidence matrix of a symmetric
design on $d$ points with block size $k$.
\end{proof}

The assumption that
the eigenvalues of the strongly regular graphs
appearing as the nontrivial relations are the same,
seems redundant. We have verified that the conclusion
of Theorem~\ref{main} holds without this assumption for $d\leq4$.

Next we show the existence of further fusions. We denote by
$\allone{n}$ the column vector of length
$n$ whose entries are all $1$.

\begin{corollary}\label{main:c}
Under the same assumptions as in Theorem~\ref{main},
$\cX$ has a fusion scheme of class $3$ with the first eigenmatrix
\begin{equation}\label{eq:class3}
\begin{bmatrix}
1&f&(k-1)f&(d-k)f\\
1&r&(k-1)r&(d-k)s\\
1&r&(\lambda-1)r+(k-\lambda)s&(k-\lambda)r+(d-2k+\lambda)s\\
1&s&\lambda r+(k-1-\lambda)s&(k-\lambda)r+(d-2k+\lambda)s
\end{bmatrix}.
\end{equation}
In particular,
there exists a fusion scheme of class $2$
with the first eigenmatrix
\begin{equation}\label{eq:class2}
\begin{bmatrix}
1&kf&(d-k)f\\
1&kr&(d-k)s\\
1&\lambda r+(k-\lambda)s&(k-\lambda)r+(d-2k+\lambda)s
\end{bmatrix}.
\end{equation}
\end{corollary}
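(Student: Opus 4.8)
The plan is to exhibit explicit partitions of the relations and of the eigenspaces that satisfy the fusion criterion of Bannai and Muzychuk recalled above, and to read off the constant row sums. By Theorem~\ref{main} I may write the principal part as $P_0=rM+s(J-M)$, where $M$ is an incidence matrix of a symmetric $2$-$(d,k,\lambda)$ design $\cD$. I identify the $d$ rows of $M$ (the nontrivial eigenspaces) with the blocks of $\cD$ and the $d$ columns (the nontrivial relations) with its points, so that $(P_0)_{ij}=r$ when point $j$ lies on block $i$ and $(P_0)_{ij}=s$ otherwise; the full matrix $P$ is obtained from $P_0$ by bordering with a first row $(1,f,\dots,f)$ and a first column of $1$'s. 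I begin by fixing a flag of $\cD$, that is, a point $x_1$ lying on a block $B_1$, which exists since $k\geq1$.

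For the class~$3$ scheme I take the column partition $\Lambda_0=\{0\}$, $\Lambda_1=\{x_1\}$, $\Lambda_2=B_1\setminus\{x_1\}$, and $\Lambda_3$ the points not on $B_1$ (sizes $1,k-1,d-k$), together with the row partition $\Delta_0=\{0\}$, $\Delta_1=\{B_1\}$, $\Delta_2$ the remaining blocks through $x_1$, and $\Delta_3$ the blocks missing $x_1$ (sizes $1,k-1,d-k$). To verify the criterion I compute, for a block $B_i$, the row sum of $P_0$ over each $\Lambda_j$ in terms of just two quantities: whether $x_1\in B_i$ (the entry $M_{i1}$) and the intersection size $|B_1\cap B_i|$. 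The only inputs needed are that $M_{i1}$ is constant on each $\Delta$-block by construction, and that $|B_1\cap B_i|$ equals $k$ when $i=1$ and $\lambda$ otherwise, which is the symmetric-design identity $MM^{T}=(k-\lambda)I+\lambda J$ used in the proof of Theorem~\ref{main}. Substituting the three cases $(M_{i1},|B_1\cap B_i|)=(1,k),(1,\lambda),(0,\lambda)$ for $\Delta_1,\Delta_2,\Delta_3$ yields constant row sums on every block, and a direct check identifies them with the entries of~(\ref{eq:class3}); the bordering accounts for the first row and first column. The criterion then certifies the fusion scheme.

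Finally, the class~$2$ scheme arises as a further fusion of the class~$3$ scheme, which is what justifies the phrase \emph{in particular}, and this is where I expect the main subtlety. One cannot simply amalgamate the \emph{first} two nontrivial relations together with the \emph{first} two nontrivial eigenspaces. Applying the criterion to~(\ref{eq:class3}), I do merge the two columns of valencies $f$ and $(k-1)f$ (the points of $B_1$) into one, but the matching row merge is of the \emph{last} two eigenspaces, $\Delta_2\cup\Delta_3$ (all blocks other than $B_1$), leaving $\Delta_1=\{B_1\}$ separate. The reason this asymmetric merge is forced is that $|B_1\cap B_1|=k\neq\lambda=|B_1\cap B_i|$, so when the $k$ points of $B_1$ are summed the block $B_1$ behaves differently from every other block: rows $2$ and $3$ of~(\ref{eq:class3}) agree on both merged columns, whereas row $1$ does not. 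Reading off these common row sums produces~(\ref{eq:class2}).
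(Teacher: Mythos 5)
Your proof is correct and takes essentially the same approach as the paper: the paper verifies the Bannai--Muzychuk criterion for exactly your flag-based partition (a point $x_1$ on a block $B$, with column classes $\{x_1\}$, $B\setminus\{x_1\}$, and the complement, matched with the rows $\{B\}$, other blocks through $x_1$, blocks missing $x_1$) by computing the product $MF$, which is just the matrix form of your case analysis via $(M_{i,x_1},|B\cap B_i|)$ and the identity $MM^{T}=(k-\lambda)I+\lambda J$. The class-$2$ scheme is likewise obtained there by fusing the first two nontrivial relations of the class-$3$ scheme; the paper leaves the accompanying merge of the \emph{last} two eigenspaces implicit, a detail you correctly make explicit.
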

\begin{proof}
Let $M$ be an incidence matrix of the design $\cD$, so that
$P_0=rM+s(J-M)$ holds. Without loss of generality, we may 
assume that the first $k$ columns of $M$ correspond to the
set of points on a block $B$ of $\cD$,
and that $B$ is represented by the first row of $M$.
Let $F$ denote the
$d\times 3$ matrix defined by
\[
F=\begin{bmatrix}
1&0&0\\
0&\allone{k-1}&0\\
0&0&\allone{d-k}
\end{bmatrix}.
\]
Then we have
\[
MF=\begin{bmatrix}
1&k-1&0\\
\allone{k-1}&(\lambda-1)\allone{k-1}&(k-\lambda)\allone{k-1}\\
0&\lambda\allone{d-k}&(k-\lambda)\allone{d-k}
\end{bmatrix}.
\]
It follows that the matrix $P_0F$ has $3$ distinct rows, which
are precisely those of the $3\times 3$ lower-right submatrix
of (\ref{eq:class3}). By the Bannai--Muzychuk criterion, we
obtain a fusion scheme of class $3$ with the first eigenmatrix
given by (\ref{eq:class3}).
Fusing the first two relations of this class $3$ association
scheme, we obtain a class $2$ association scheme with the
first eigenmatrix given by (\ref{eq:class2}).
\end{proof}

Amorphous pseudocyclic association schemes 
satisfy the
conditions of Theorem~\ref{main}. However, the symmetric
design appearing in the principal part of the first eigenmatrix
is the complete
$2$-$(d,d-1,d-2)$ design. The conclusion of
Corollary~\ref{main:c} is trivially true for amorphous
association schemes. The nontrivial part of 
Corollary~\ref{main:c} is that it holds also for
non-amorphous association schemes.

Examples of 
cyclotomic
schemes satisfying the 
conditions of Theorem~\ref{main} have been investigated
thoroughly by Schmidt and White \cite{SW}, 
and some of the exceptional examples
were already found by Langevin \cite{Lan}.
The smallest example in \cite[Table 1]{SW}
is the cyclotomic scheme of class $11$ on $\GF(3^5)$,
which gives a unique symmetric $2$-$(11,5,2)$ design
by Theorem~\ref{main}.
Its associated 
strongly regular graph is the coset graph of the
ternary Golay code (see \cite{BLS}), which was later
recognized as the cyclotomic graph by
van Lint and Schrijver \cite{LS} in 1981.
In this sense, a counterexample to A.V.~Ivanov's conjecture
\cite{IP} could be considered
known before the conjecture was announced in 1991.
We note that the fusion schemes of this cyclotomic
association scheme obtained by Corollary~\ref{main:c}
were already pointed out by Delsarte 
\cite[Example 2 on p.93]{Del}, in 1973.


There are three more pseudocyclic association schemes
satisfying the conditions of Theorem~\ref{main},
which are not cyclotomic schemes, but
fusions of cyclotomic schemes.
They will be given in the next section.

\section{Projective spaces and fusion schemes}
\label{sec:proj}
Let $q$ be a prime power, $m$ an integer greater than
$1$. By $\PG(m,q)$ we mean the symmetric $2$-$(d,k,\lambda)$
design consisting of the points and hyperplanes of
the projective space $\PG(m,q)$ of dimension $m$ over
$\GF(q)$, where $d=(q^{m+1}-1)/(q-1)$, 
$k=(q^m-1)/(q-1)$, $\lambda=(q^{m-1}-1)/(q-1)$.
Let $M$ be the hyperplane-point incidence matrix
of $\PG(m,q)$, and 
suppose that the columns of $M$ are indexed by the
points of $\PG(m,q)$ in such a way that
the last $q+1$ columns
correspond to the set of points on a line
$L=\{\beta_1,\ldots,\beta_{q+1}\}$.
Consider the following $d\times(q+2)$ matrix 
\[
F_1=\begin{bmatrix}
\allone{d-q-1} & 0\\
0& I_{q+1}
\end{bmatrix},
\]
If the rows of $M$ are indexed by $\lambda$ hyperplanes containing $L$,
$k-\lambda$ hyperplanes which meet $L$ at $\beta_1$,
$k-\lambda$ hyperplanes which meet $L$ at $\beta_2$, and so on,
then we have
\begin{equation}\label{eq:F1}
MF_1=\begin{bmatrix}
(k-q-1)\allone{\lambda}&&J_{\lambda\times(q+1)}&\\
(k-1)\allone{k-\lambda}&\allone{k-\lambda}&&0\\
\vdots&&\ddots&&\\
(k-1)\allone{k-\lambda}&0&&\allone{k-\lambda}
\end{bmatrix},
\end{equation}

A spread of $\PG(3,q)$ is a set of lines which
partition the set of points. A spread in
$\PG(3,q)$ exists for any prime power $q$.
Let $\cS=\{L_1,\ldots, L_{q^2+1}\}$ be a spread in $\PG(3,q)$. 
Let $M$ be the plane-point incidence matrix of $\PG(3,q)$,
and suppose that the columns of $M$ are indexed in
accordance with the partition $\cS$ of the points
of $\PG(3,q)$. 
Consider the following $(q^2+1)(q+1)\times(q^2+1)$ matrix 
\[
F_2=\begin{bmatrix}
\allone{q+1} && 0\\
&\ddots&\\
0&&\allone{q+1}
\end{bmatrix}.
\]
If the rows of $M$ are indexed by $q+1$ planes containing $L_1$,
$q+1$ planes containing $L_2$, and so on,
then we have
\begin{equation}\label{eq:F2}
MF_2=
\begin{bmatrix}
(q+1)\allone{q+1} &&\allone{q+1}\\
&\ddots&\\
\allone{q+1}&&(q+1)\allone{q+1}
\end{bmatrix}.
\end{equation}

\begin{theorem}\label{thm:proj}
Let $\cX$ be an association scheme of class
$d=(q^{m+1}-1)/(q-1)$ with the first eigenmatrix
\[
P=\begin{bmatrix}
1&f\allone{d}^T\\
\allone{d}&r M+s (J-M)
\end{bmatrix},
\]
where $M$ is an incidence matrix of $\PG(m,q)$. 
Let $k=(q^{m}-1)/(q-1)$.
Then the following statements hold.
\begin{enumerate}
\item[{\rm(i)}] There exists a fusion scheme of class $q+2$ with
the first eigenmatrix
\begin{equation}\label{P1}
\begin{bmatrix}
1&(d-q-1)f&f\allone{q+1}^T\\
1&(k-q-1)r+(d-k)s&r\allone{q+1}^T\\ 
\allone{q+1}&((k-1)r+(d-k-q)s)\allone{q+1}&(r -s ) I+s J
\end{bmatrix}.
\end{equation}
\item[{\rm(ii)}]
If $m=3$, then there exists an amorphous
fusion scheme of class $q^2+1$ with the first eigenmatrix
\[
P=\begin{bmatrix}
1&(q+1)f\allone{q^2+1}^T\\
\allone{q^2+1}&q(r   -s  ) I+(r +s  q) J
\end{bmatrix}.
\]
\end{enumerate}
\end{theorem}
\begin{proof}
(i) 
We can see easily from (\ref{eq:F1}) that the matrix
$(r  M+s (J-M))F_1$
has $q+2$ distinct rows, which are precisely those of
the lower-right $(q+2)\times (q+2)$ submatrix of
(\ref{P1}). Then the result follows from the
Bannai--Muzychuk criterion.

(ii) 
The proof is similar to (i), noting that the matrix
$(r  M+s (J-M))F_2$
has $q^2+1$ distinct rows.
\end{proof}

\begin{example}
Let $\alpha$ be an arbitrary
primitive element of $\GF(2^{12})$, and
let
\[
H_j=\{(x,y)\mid x-y\in \alpha^j\langle \alpha^{45}\rangle\} \quad
 (j\in\ZZ).
\]
For a fixed integer $a$ which is relatively prime to $15$, we put
\[
R_k=\bigcup_{i=0}^2 H_{a(3(k-1)+5i)}.
\]
By computer, we have verified that the graph $\Gamma_k$ 
on $\GF(2^{12})$ with edge set $R_k$
is a strongly regular graph with eigenvalues 
$273,17,-15$, for each $k\in\{1,\ldots,15\}$.
In fact, these graphs are one of the strongly regular graphs
discovered by de Lange \cite{L}.
Together with the diagonal relation $R_0$,
we obtain a $15$-class pseudocyclic association scheme
$(\GF(2^{12}),\{R_i\}_{i=0}^{15})$
satisfying the hypothesis of Theorem~\ref{main}.
The principal part of the first eigenmatrix is a linear combination
of the all-ones matrix and an incidence matrix of a 
symmetric $2$-$(15,7,3)$ design.
Since this matrix is circulant by the definition of $R_k$,
the design is isomorphic to $\PG(3,2)$ by \cite[p.~984]{Hall}.

By Theorem~\ref{thm:proj}(i), we obtain a $4$-class fusion scheme
with the first eigenmatrix given by (\ref{e=45}).
By Theorem~\ref{thm:proj}(ii), we obtain a $5$-class pseudocyclic
amorphous association scheme. We have verified by
computer that this amorphous association scheme
is not isomorphic to the amorphous cyclotomic association
scheme of class $5$ on $\GF(2^{12})$.
\end{example}

\begin{example}
Let $\alpha$ be an arbitrary
primitive element of $\GF(2^{20})$, and let
\[
H_j=\{(x,y)\mid x-y\in \alpha^j\langle \alpha^{75}\rangle\} \quad
 (j\in\ZZ).
\]
For a fixed integer $a$ which is relatively prime to $15$, we put
\[
R_k=\bigcup_{i=0}^4 H_{a(5(k-1)+3i)}.
\]
By computer, we have verified that the graph $\Gamma_k$ 
on $\GF(2^{20})$ with edge set $R_k$
is a strongly regular graph with eigenvalues 
$69905,273,-239$, for each $k\in\{1,\ldots,15\}$.
Together with the diagonal relation $R_0$,
we obtain a $15$-class pseudocyclic association 
scheme $(\GF(2^{20}),\{R_i\}_{i=0}^{15})$
satisfying the hypothesis of Theorem~\ref{main}.
The principal part of the first eigenmatrix is a linear combination
of the all-ones matrix and an incidence matrix of a $2$-$(15,7,3)$ design.
Since this matrix is circulant by the definition of $R_k$,
the design is isomorphic to $\PG(3,2)$ by \cite[p.~984]{Hall}.

By Theorem~\ref{thm:proj}(i), we obtain a $4$-class fusion scheme
with the first eigenmatrix given by (\ref{e=75}).
By Theorem~\ref{thm:proj}(ii), we obtain a $5$-class pseudocyclic
amorphous association scheme. We conjecture that
the strongly regular graphs in this association scheme
are not isomorphic to a cyclotomic strongly regular graph,
and hence our amorphous association scheme is new.

\end{example}

\begin{example}
Let $\alpha$ be an arbitrary
primitive element of $\GF(2^{21})$, and let
\[
H_j=\{(x,y)\mid x-y\in \alpha^j\langle \alpha^{49}\rangle\} \quad
 (j\in\ZZ).
\]
For a fixed integer $a$ which is relatively prime to $7$, we put
\[
R_k=\bigcup_{i=0}^6 H_{a(7(k-1)+i)}.
\]
By computer, we have verified that the graph $\Gamma_k$ 
on $\GF(2^{21})$ with edge set $R_k$
is a strongly regular graph with eigenvalues 
$299593,585,-439$, for each $k\in\{1,\ldots,7\}$.
Together with the diagonal relation $R_0$,
we obtain a $7$-class pseudocyclic association 
scheme $(\GF(2^{21}),\{R_i\}_{i=0}^{7})$
satisfying the hypothesis of Theorem~\ref{main}.
The principal part of the first eigenmatrix is a linear combination
of an incidence matrix of $\PG(2,2)$ and the all-ones matrix.

By Theorem~\ref{thm:proj}(i), 
we obtain a non-amorphous $4$-class fusion scheme of the
cyclotomic scheme of class $49$ on $\GF(2^{21})$ 
with the following first eigenmatrix:
\begin{equation}  \label{e=49}
\begin{bmatrix}
1 & 1198372 & 299593 & 299593 & 299593 \\
1 & -1756 & 585 & 585 & 585 \\
1 & 292 & -439 & -439 & 585 \\
1 & 292 & -439 & 585 & -439 \\
1 & 292 & 585 & -439 & -439
\end{bmatrix}.
\end{equation}
This gives the third counterexample to A.V.~Ivanov's
conjecture having class $4$.
\end{example}

\paragraph{Acknowledgement.} The authors thank Qing Xiang
for bringing the reference \cite{SW} to the authors'
attention.




\end{document}